\newcommand{\Ext}{\mathrm{Ext}}
\newcommand{\Hom}{\mathrm{Hom}}
\newcommand{\End}{\mathrm{End}}
\newcommand{\sHom}{\underline{\mathrm{Hom}}}
\newcommand{\sEnd}{\underline{\mathrm{End}}}
\newcommand{\ann}{\mathrm{ann}}
\newcommand{\sann}{\underline{\mathrm{ann}}}
\newcommand\ca{\mathrm{ca}}
\newcommand\co{\mathrm{co}}
\newcommand{\m}{\mathfrak{m}}
\newcommand{\module}{\mathrm{mod}}
\newcommand{\CM}{\mathrm{CM}}
\newcommand{\reflexive}{\mathrm{ref}}
\newcommand{\CC}{\mathcal{C}}
\newcommand{\XX}{\mathcal{X}}
\newcommand{\sing}{\mathrm{Sing}}
\newtheorem{theorem}{Theorem}[section]
\newtheorem*{theorem*}{Theorem}
\newtheorem{lemma}[theorem]{Lemma}
\newtheorem*{lemma*}{Lemma}
\newtheorem{proposition}[theorem]{Proposition}
\newtheorem*{proposition*}{Proposition}
\newtheorem{corollary}[theorem]{Corollary}
\newtheorem*{corollary*}{Corollary}
\newtheorem{question}[theorem]{Question}
\theoremstyle{definition}
\newtheorem{definition}[theorem]{Definition}
\newtheorem*{definition*}{Definition}
\newtheorem{example}[theorem]{Example}
\newtheorem{remark}[theorem]{Remark}
\newtheorem{introtheorem}{Theorem}
\numberwithin{equation}{theorem}
\def\CC{\mathbb{C}}
\DeclareMathOperator{\tr}{\mathrm{tr}}
\title{Annihilation of cohomology over one dimensional almost Gorenstein rings}
\author[\"{O}.~Esentepe]{\"{O}zg\"{u}r Esentepe}
\address{Institut für Mathematik und Wissenschaftliches Rechnen, Universität Graz,
Heinrichstraße 36, 8010 Graz, Austria}
\email{ozgur.esentepe@uni-graz.at}
\urladdr{https://www.sntp.ca}
\subjclass[2020]{13C14, 13C60, 18G65}
\keywords{cohomology annihilator, annihilators of Ext, almost Gorenstein rings, trace ideals, canonical reduction number}
\begin{document}

\begin{abstract}
   Given a Cohen-Macaulay local ring, the cohomology annihilator ideal and the annihilator of the stable category of maximal Cohen-Macaulay modules are two ideals closely related both with each other and the singularities of the ring. Kimura recently showed that the two ideals agree up to radicals. In this article, we give a sufficient condition for the two ideals to be equal. As an application, we show that the cohomology annihilator ideal of a one dimensional analytically unramified almost Gorenstein complete local ring agrees with the conductor ideal. 
\end{abstract}

\maketitle
\thispagestyle{empty}

\section{Introduction}

Given a field $k$, the power series ring $k[\![x]\!]$ in one variable and a finitely generated $k[\![x]\!]$-module $M$ generated by the elements $m_1, \ldots, m_n$; one can construct a surjective $k[\![x]\!]$-linear map $k[\![x]\!]^{\oplus n} \to M$ by sending the standard basis vector $e_i$ to $m_i$. The kernel of this map encodes all the relations between the generators $m_1, \ldots, m_n$. It was known at the end of the nineteenth century that the kernel of this map will always be a free $k[\![x]\!]$-module. In fact, for polynomial rings of several variables Hilbert proved the analogous result for higher syzygies in \cite{Hilbert} and this has started the interplay between homological algebra and singularities. Today, one can use the language of homological algebra to state that any finitely generated module over a polynomial ring $S$ in $d$ variables has projective dimension at most $d$. One can also use the language of derived functors and say that for any two finitely generated $S$-modules $M$ and $N$, we have $\Ext_S^n(M,N) = 0$ for any $n > d$. Over the last century, this idea has been generalised vastly and became a fundamental tool in commutative algebra and algebraic geometry. Now, we know, for almost a century, that a commutative Noetherian local ring is regular if and only if it has finite global dimension.

When a local ring $R$ is not regular, for any positive integer $n$, we can find finitely generated $R$-modules $M,N$ such that $\Ext_R^n(M,N) \neq 0$. The structure of these nonzero Ext-modules can give us information about the singular points on the corresponding algebraic variety. To this end, one studies the annihilators of these Ext-modules. A systematic approach to this problem was initiated by Iyengar and Takahashi in \cite{iyengar-takahashi} who introduced the notion of a  \textit{cohomology annihilator ideal} $\ca(R)$ consisting of the ring elements which uniformly annihilate all high enough Ext-modules (see Definition \ref{definition-cohomology-annihilator} for a more precise formulation). One can now use this language to say that $R$ has finite global dimension if and only if $\ca(R) = R$ \cite[Example 2.5]{iyengar-takahashi}. They proved that the vanishing locus of this ideal is the singular locus of $R$ for a large class of rings. It is not difficult to find elements in this ideal. We present some examples.
\begin{enumerate}
    \item If $R$ is a hypersurface ring $R = \CC[\![x_1, \ldots, x_n]\!]/(f)$ defined by a polynomial $f$ in the maximal ideal $(x_1, \ldots, x_n)$. Then, the partial derivatives of $f$ with respect to the variables $n$ lie in the cohomology annihilator ideal of $R$. More generally, if we consider a regular sequence $(f_1, \ldots, f_m)$ in $(x_1, \ldots, x_m)$ and put $R$ to be the complete intersection ring $\CC[\![x_1, \ldots, x_n]\!]/(f_1, \ldots, f_m)$, then the cohomology annihilator ideal of $R$ contains the Jacobian ideal of $R$ which is generated by the maximal minors of the Jacobian matrix whose entries are given by the partial derivatives $\partial f_j/\partial x_i$. This appears in the work of Buchweitz on Maximal Cohen-Macaulay modules and Tate cohomology \cite[Section 7]{Buchweitz}. We refer to Appendix C.6 of \textit{op. cit.} for recent developments.
    \item Let $R$ be a one dimensional reduced complete Noetherian local ring, $Q(R)$ be its total quotient ring and $\overline{R}$ be the integral closure of $R$ inside $Q(R)$. The conductor ideal $\co(R)$ of $R$ is defined as $\co(R) = \{ r\in R \colon r \overline{R} \subseteq R\} $. Wang proved in \cite{Wang} that the conductor ideal, in this case, is contained in the cohomology annihilator ideal.     
\end{enumerate}
However, it turns out to be a difficult task to determine this ideal completely. We have only a few classes of rings where we have a complete description of the cohomology annihilator ideal.
\begin{enumerate}
    \item Let $R$ be a one dimensional reduced complete Noetherian local ring with maximal ideal $\m$ and assume that $R$ is analytically unramified and that there is an isomorphism $\End_R(\m) \cong \overline{R}$. Such rings were studied by Faber in \cite{Faber} who called them \textit{one-step normal}. They also appear as \textit{far-flung Gorenstein nearly Gorenstein} rings of \cite{Herzog-Kumashiro-Stamate} and one dimensional \textit{Ulrich split} rings of \cite{Dao-Dey-Dutta} in the literature. Faber proved that if $R$ is a singular one step normal ring, then the conductor ideal equals the maximal ideal. Combining this with Wang's result, one has a complete description of the cohomology annihilator ideal: $\ca(R) = \co(R)$. 
    \item One can extend this equality to the Gorenstein case. In particular, it is proved in \cite[Theorem 5.10]{Esentepe} that if $R$ is an analytically unramified one dimensional Gorenstein local ring, then the equality $\ca(R) = \co(R)$ still holds. 
\end{enumerate}
In this paper, we prove that the equality of the conductor and the cohomology annihilator extends to a larger class of rings which are close to being Gorenstein but not necessarily Gorenstein. To do so, we first relate the cohomology annihilator to the annihilator of the stable category of maximal Cohen-Macaulay modules (see Definition \ref{general-definitions}(3), Remark \ref{cohomology-annihilator-remarks}(4, 5)).
\begin{introtheorem}[Theorem \ref{main-theorem}]\label{introtheorem-a}
 Let $R$ be a Cohen-Macaulay local ring with canonical module $\omega$. If $\Omega \CM^\times(R)$ is closed under the canonical duality functor $D(-)$, then there exists an equality 
    \begin{align*}
    \ca(R) = \sann_R (\CM(R)).
    \end{align*}
\end{introtheorem}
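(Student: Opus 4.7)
The plan is to prove the nontrivial containment $\ca(R) \subseteq \sann_R(\CM(R))$; the reverse follows from the identification $\Ext^n_R(M,N) \cong \sHom_R(\Omega^n M, N)$ for maximal Cohen--Macaulay $M$ and $n \geq 1$, which together with the existence of MCM syzygies in degree $\dim R$ shows that any element stably annihilating $\CM(R)$ kills all sufficiently high $\Ext_R^n(-,-)$.

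Fix $r \in \ca(R)$. The canonical duality $D = \Hom_R(-,\omega)$ sends $\add(R)$ to $\add(\omega)$, and the bijection $\Hom_R(M,N) \cong \Hom_R(DN,DM)$ descends to an isomorphism $\sHom_R(M,N) \cong \sHom_R^{\omega}(DN,DM)$, where $\sHom_R^{\omega}$ denotes morphisms modulo those factoring through $\add(\omega)$. Since $D$ is a bijection on $\CM(R)$, this yields the unconditional equality $\sann_R(\CM(R)) = \sann_R^{\omega}(\CM(R))$, reducing the task to showing $r \cdot \sHom_R^{\omega}(M,N) = 0$ for all MCM $M, N$. Embedding $M$ into its $\omega$-injective envelope in $\CM(R)$ produces a short exact sequence $0 \to M \to \omega^{n_0} \to C \to 0$ with $C$ MCM and $C \cong D(\Omega(DM))$; the hypothesis applied to $\Omega(DM) \in \Omega \CM^\times(R)$ then gives $C \cong \Omega L_1$ for some MCM module $L_1$. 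Applying $\Hom_R(-,N)$ to the resulting sequence $0 \to M \to \omega^{n_0} \to \Omega L_1 \to 0$ and using $\Ext^i_R(\Omega L_1, \omega) = 0$ for $i \geq 1$ (local duality, as $\Omega L_1$ is MCM) to identify morphisms extending to $\omega^{n_0}$ with morphisms factoring through $\add(\omega)$, I obtain the injection
\begin{equation*}
\sHom_R^{\omega}(M,N) \hookrightarrow \Ext^1_R(\Omega L_1, N) \cong \Ext^2_R(L_1, N).
\end{equation*}

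To conclude, I need $r$ to annihilate the right-hand side, bridging between the fixed low degree of this Ext and the high degrees in which $\ca(R)$ a priori operates. I would iterate the hypothesis: applying it to $L_1$ yields an MCM module $L_2$ together with a sequence $0 \to L_1 \to \omega^{n_1} \to \Omega L_2 \to 0$, and continuing inductively produces an $\omega$-coresolution $0 \to M \to \omega^{n_0} \to \omega^{n_1} \to \cdots$ along with MCM modules $L_1, L_2, \ldots$. Running the injection argument through these iterations while repeatedly invoking $\Ext^i_R(\mathrm{MCM},\omega)=0$ to keep the connecting maps clean should re-embed $\sHom_R^{\omega}(M,N)$ into $\Ext^n_R(L_k, N)$ for arbitrarily large $n$, where $\ca(R)$ acts trivially, giving $r \in \sann_R^{\omega}(\CM(R)) = \sann_R(\CM(R))$. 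The main obstacle is ensuring this iterated map remains injective at each step; the cokernel terms $\Ext^1_R(\omega^{n_k}, N) \to \Ext^1_R(\Omega L_k, N)$ in the successive long exact sequences must be controlled by snake-lemma bookkeeping, and it is precisely the closedness of $\Omega \CM^\times(R)$ under $D$ that keeps the sequence of MCM modules $L_k$ in the class where the inductive step can be reapplied.
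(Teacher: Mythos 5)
Your reduction to morphisms modulo $\add(\omega)$ and your first embedding are both correct: the duality does give $\sann_R(\CM(R))=\bigcap_{M\in\CM(R)}\ann_R\,\sEnd^{\omega}_R(M)$, and applying $\Hom_R(-,N)$ to $0\to M\to\omega^{n_0}\to D\Omega DM\to 0$ together with $\Ext^{\geq1}_R(X,\omega)=0$ for maximal Cohen--Macaulay $X$ yields $\sHom^{\omega}_R(M,N)\hookrightarrow\Ext^1_R(D\Omega DM,N)$; taking $N=M$ this is precisely the paper's Lemma \ref{main-lemma} in dual form, i.e.\ $\sann_R(D\Omega M)\subseteq\sann_R(M)$. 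The genuine gap is the step you yourself flag: the chained embedding into $\Ext^n_R(L_k,N)$ for arbitrarily large $n$ is not established and, as designed, cannot be. Each further step factors through a connecting map such as $\Ext^2_R(L_1,N)\to\Ext^3_R(\Omega L_2,N)$ coming from $0\to L_1\to\omega^{n_1}\to\Omega L_2\to0$, whose kernel is the image of $\Ext^2_R(\omega^{n_1},N)$, and $\Ext^{\geq1}_R(\omega,N)$ does not vanish for non-Gorenstein $R$ and arbitrary maximal Cohen--Macaulay $N$; nothing forces the composite to remain injective on the image of $\sHom^{\omega}_R(M,N)$. (Also, your exact sequences coresolve $L_1,L_2,\ldots$ rather than the cosyzygies of $M$, so they do not splice into the claimed $\omega$-coresolution of $M$.) A structural warning sign: the first embedding and the degree gain along an honest $\omega$-coresolution use only that cosyzygies of MCM modules are MCM; the hypothesis enters your chain only to relabel cosyzygies as syzygies. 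If the chaining were injective, you would obtain $\ca(R)=\sann_R(\CM(R))$ for every Cohen--Macaulay local ring with a canonical module, with the hypothesis playing no essential role --- far more than is known (Kimura's theorem gives only equality of radicals), so the argument proves too much and the injectivity cannot be rescued by bookkeeping alone.

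What is missing is the bridge the paper uses between low and high cohomological degree: the unconditional equality $\ca(R)=\sann_R(\Omega^n\CM(R))$ for $n\gg0$ (Remark \ref{cohomology-annihilator-remarks}(4)), combined with a one-syzygy-at-a-time descent instead of one long embedding. Your step with $N=M$ gives $\sann_R(D\Omega M)\subseteq\sann_R(M)\subseteq\sann_R(\Omega M)$ for every MCM $M$; the hypothesis supplies an MCM module $X$ with $\Omega X\cong D\Omega M$, hence $D\Omega X\cong\Omega M$, and applying the same two inclusions to $X$ gives $\sann_R(\Omega M)=\sann_R(D\Omega X)\subseteq\sann_R(X)\subseteq\sann_R(\Omega X)=\sann_R(D\Omega M)\subseteq\sann_R(M)$. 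Thus $\sann_R(M)=\sann_R(\Omega M)$ for all MCM $M$, whence $\sann_R(\Omega^n\CM(R))=\sann_R(\CM(R))$ for all $n$, and the theorem follows; this is exactly the route of Proposition \ref{main-proposition} and Theorem \ref{main-theorem}. Finally, a small repair for the easy inclusion: $\Ext^n_R(M,N)\cong\sHom_R(\Omega^nM,N)$ need not hold (in general there is only a natural surjection from $\Ext^n_R(M,N)$ onto $\sHom_R(\Omega^nM,N)$); argue instead that if $r$ acts on the MCM module $\Omega^{n-1}M$ through a projective, then $r$ kills $\Ext^1_R(\Omega^{n-1}M,N)\cong\Ext^n_R(M,N)$, which is Remark \ref{cohomology-annihilator-remarks}(2).
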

Then, we apply this theorem to the class of almost Gorenstein rings.
\begin{introtheorem}[Theorem \ref{theorem-cohomology-annihilator-of-an-almost-Gorenstein-ring}]\label{introtheorem-b}
     Let $R$ be an analytically unramified one dimensional almost Gorenstein complete local ring. Then, we have an equality 
    \begin{align*}
    \ca(R) = \co(R).
    \end{align*}
\end{introtheorem}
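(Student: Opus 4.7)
The strategy is to reduce Theorem B to Theorem A and then to a direct calculation of the stable annihilator of one distinguished MCM module. First I would verify the hypothesis of Theorem A for a one dimensional almost Gorenstein complete local ring, then apply Theorem A to obtain $\ca(R) = \sann_R(\CM(R))$, and finally compare $\sann_R(\CM(R))$ with $\co(R)$ using the integral closure $\overline{R}$.

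\textbf{Step 1 (hypothesis of Theorem A).} I would show that $\Omega\CM^\times(R)$ is closed under $D = \Hom_R(-,\omega)$. In dimension one every MCM module $M$ is torsion-free, and the first syzygy of an MCM module is again MCM, so $\Omega\CM^\times(R)$ is essentially the subcategory of MCM modules (without free summands) arising as first syzygies, which in dimension one is essentially all of $\CM^\times(R)$. Using the defining short exact sequence $0 \to R \to \omega \to C \to 0$ of an almost Gorenstein ring, where $C$ is an Ulrich module, I would analyse how $D$ behaves on the building blocks of $\CM^\times(R)$: on the one hand $D(R) = \omega$ and $D(\omega) = R$, and on the other hand the Ulrich layer $C$ contributes only projective-free syzygies that are respected by $D$. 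The key point is that in this setting $D$ is an exact duality of $\CM(R)$ that preserves the property of having no free summand (up to projectives), so it preserves the syzygy class.

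\textbf{Step 2 (application of Theorem A).} Once Step 1 is in place, Theorem A yields
\begin{equation*}
  \ca(R) = \sann_R(\CM(R)).
\end{equation*}

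\textbf{Step 3 (identifying $\sann_R(\CM(R))$ with $\co(R)$).} For one inclusion, Wang's theorem (stated in the introduction) gives $\co(R) \subseteq \ca(R) = \sann_R(\CM(R))$. For the reverse inclusion I would use that, because $R$ is analytically unramified, the integral closure $\overline{R}$ is a finitely generated $R$-module and is MCM over $R$ (a finite product of DVRs). Therefore
\begin{equation*}
  \sann_R(\CM(R)) \subseteq \sann_R(\overline{R}).
\end{equation*}
A direct calculation in the stable category identifies $\sann_R(\overline{R})$ with the trace ideal $\tr(\overline{R})$: an element $r \in R$ acts as zero on $\overline{R}$ in $\sEnd_R(\overline{R})$ precisely when multiplication by $r$ on $\overline{R}$ factors through a free $R$-module, which happens iff $r$ lies in the image of $\Hom_R(\overline{R},R) \otimes_R \overline{R} \to R$. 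Using $\Hom_R(\overline{R},R) = \co(R)$ and the fact that $\co(R)$ is an ideal of $\overline{R}$, one concludes $\tr(\overline{R}) = \co(R)$. Combining everything gives $\co(R) \subseteq \sann_R(\CM(R)) \subseteq \co(R)$, hence $\ca(R) = \co(R)$.

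\textbf{Main obstacle.} The hard part is Step 1: verifying that $\Omega\CM^\times(R)$ is closed under $D$ for an almost Gorenstein ring. This is where the almost Gorenstein hypothesis is used in an essential way, since the Ulrich cokernel in the defining sequence controls the obstruction to $D$ preserving the class $\Omega\CM^\times(R)$; the remaining steps are then essentially formal consequences of Theorem A, Wang's theorem, and the trace computation for $\overline{R}$.
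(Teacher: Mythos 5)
Your Steps 2 and 3 are exactly the paper's route: apply Theorem A, then squeeze $\ca(R)=\sann_R(\CM(R))$ between Wang's inclusion $\co(R)\subseteq\ca(R)$ and $\sann_R(\CM(R))\subseteq\sann_R(\overline{R})=\co(R)$ (the last identification is Remark \ref{conductor-remarks}(3); your trace-ideal computation for $\overline{R}$ is a reasonable substitute). The problem is Step 1, which you correctly flag as the main obstacle but do not actually prove, and your sketch of it contains a false assertion. In dimension one $\Omega\CM^\times(R)$ is \emph{not} ``essentially all of $\CM^\times(R)$'': for a one-dimensional generically Gorenstein ring, $\Omega\CM(R)$ coincides with the reflexive modules (Remark \ref{omega-Ulrich-remarks}(1)), which is in general a proper subcategory of the torsion-free (= MCM) modules; for instance $\omega$ itself is MCM but never lies in $\Omega\CM(R)$ unless $R$ is Gorenstein. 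Likewise, the claim that $D$ ``preserves the syzygy class'' because it is an exact duality preserving modules without free summands is precisely the statement to be established, not a reason for it: the whole difficulty is that the canonical dual of a syzygy of an MCM module need not again be such a syzygy, and the defining sequence $0\to R\to\omega\to C\to 0$ does not by itself control this in the way you suggest.

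The paper closes this gap via trace ideals and $\omega$-Ulrich modules (Example \ref{almost-Gorenstein-example-which-shows-that-minimal-multiplicity-can-be-dropped}): for a one-dimensional almost Gorenstein ring that is not Gorenstein, Kumashiro's theorem gives canonical reduction number $2$ and $\tr(\omega)=\m$, whence $b(\omega)=\tr(\omega)=\m$ by Remarks \ref{canonical-reduction-remarks}(2) and \ref{blow-up-remarks}(3). Any $M\in\Omega\CM^\times(R)$ is reflexive and has $\tr(M)\subseteq\m=b(\omega)$ (no free summand), so $M$ is $\omega$-Ulrich by Remark \ref{i-ulrich-modules}(2); then $DM\cong M^*$ by Remark \ref{omega-Ulrich-remarks}(2), and $M^*$ is a second syzygy, hence lies in $\Omega\CM(R)$. (The Gorenstein case is already covered by the known equality $\ca(R)=\co(R)$ for Gorenstein rings.) Without an argument of this kind, your Step 1 — and hence the appeal to Theorem A — does not go through.
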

The paper structured as follows. In Section 2, we give preliminaries regarding stable annihilator ideals and prove Theorem \ref{introtheorem-a}. Then, we look at the hypothesis of this theorem: when is $\Omega \CM^\times(R)$ closed under the canonical duality functor? When the ring has minimal multiplicity, Kobayashi-Takahashi studied this condition in \cite{Kobayashi-Takahashi}. We both recall their findings and also slightly improve their theorems. We also relate our findings to \textit{cocohomological} annihilators of a module which were introduced recently by Dey-Liu-Mifune-Otake \cite{Dey-Liu-Mifune-Otake}.

In Section 3, we consider modules in $\Omega \CM(R)$ whose canonical dual also belong to $\Omega \CM(R)$. The main actors in this section are trace ideals and $I$-Ulrich modules introduced by Dao-Maitra-Sridhar \cite{Dao-Maitra-Sridhar}. We show that a module $M \in \Omega \CM(R)$ has its canonical dual $DM$ in $\Omega \CM(R)$ if and only if its trace ideal $\tr(M)$ is contained in the canonical trace ideal $\tr(\omega)$ provided that $R$ has canonical reduction number 2 \cite{Kumashiro}. We conclude the paper by proving Theorem \ref{introtheorem-b}.

\section{Stable annihilators and the main theorem}

In this section, we give some preliminaries about annihilators of Ext modules and prove our main theorem. Throughout, we work with commutative Noetherian local rings and all modules are assumed to be finitely generated. For a commutative ring $R$, we denote by $\module R$ the category of finitely generated $R$-modules. When $R$ is a Cohen-Macaulay local ring, $\CM(R)$ denotes the category of maximal Cohen-Macaulay modules. We start with some general definitions.

\begin{definition}\label{general-definitions}
    Let $R$ be a commutative Noetherian local ring and $M,N$ be an $R$-modules. 
    \begin{enumerate}
        \item We denote by $D(-)$ the canonical dual functor $\Hom_R(-,\omega)$ when $R$ is a Cohen-Macaulay local ring with canonical module $\omega$. It is a duality when restricted to maximal Cohen-Macaulay modules and we call $DM$ the \textit{canonical dual} of $M$.
        \item We denote by $P(M,N)$ the set of all morphisms from $M$ to $N$ that factors through a projective $R$-module. This is a submodule of $\Hom_R(M,N)$. We denote the corresponding quotient module by $\sHom_R(M,N)$. When $M = N$, we use the notation $\sEnd_R(M)$.
        \item We denote by $\sann_R(M)$ the annihilator of the $R$-module $\sEnd_R(M)$ and call it the \textit{stable annihilator} of $M$. A ring element $r \in R$ belongs to $\sann_R M$ if and only if we have a commutative diagram
            \begin{align*}
                \xymatrix{
                M \ar^{r}[rr] \ar[dr]  && M \\
                &P \ar[ur]&
                }
            \end{align*}
            with $P$ a projective module where the horizontal arrow stands for multiplication by the element $r$. For a subcategory $\XX$ of $\module R$, we denote by $\sann_R(\XX)$ the intersection 
            \begin{align*}
            \sann_R(\XX) = \bigcap_{M \in \XX} \sann_R(M).
            \end{align*}
        \item Given a projective $R$-module $P$ and a surjective morphism $P \to M \to 0$, we call the kernel a \textit{syzygy} of $M$ and denote it by $\Omega_R M$. This is uniquely determined up to projective summands by Schanuel's Lemma. When it is clear from the context, we drop the subscript and use $\Omega M$ to denote a syzygy. We define the $n$th syzygy $\Omega^n M$ inductively. For a subcategory $\XX$ of $\module R$, we denote by $\Omega^n \XX$ the category consisting of $n$th syzygies of modules in $\XX$. 
    \end{enumerate}
\end{definition}
We now introduce cohomology annihilators. The definition is due to Iyengar and Takahashi \cite{iyengar-takahashi}.
\begin{definition}\label{definition-cohomology-annihilator}
    Let $R$ be a commutative Noetherian local ring.
    \begin{enumerate}
        \item For any positive integer $n$, define the \textit{nth cohomology annihilator ideal} as
        \begin{align*}
        \ca^n(R):= \{r \in R \colon r \Ext_R^{\geq n}(M,N) = 0 \text{ for all } M,N \in \module R\}.
        \end{align*}
        \item The \textit{cohomology annihilator ideal} $\ca(R)$ of $R$ is defined as the union 
        \begin{align*}
            \ca(R) := \bigcup_{n \in \mathbb{N} } \ca^n(R)
        \end{align*}    
    \end{enumerate}
\end{definition}
\begin{remark}\label{cohomology-annihilator-remarks}
    Let $R$ be a commutative Noetherian local ring and $M$ a finitely generated $R$-module.
    \begin{enumerate}
        \item The isomorphism $\Ext_R^{n+1}(M,N) \cong \Ext_R^n(\Omega M, N)$ gives us an inclusion of ideals $\ca^n(R) \subseteq \ca^{n+1}(R)$ for any $n \geq 1$. As $R$ is Noetherian, this implies that for $n \gg 0$, we have an equality $\ca(R) = \ca^n(R)$.  
        \item There are equalities 
        \begin{align*}
        \bigcap_{X \in \module R} \ann_R \Ext_R^{\geq n+1}(M,N) &= \ann_R\Ext_R^{\geq n+1}(M, \Omega^{n+1}M) \\
        &= \ann_R\Ext_R^1(\Omega^n M, \Omega^{n+1}M)
        \end{align*}
        for any nonnegative integer $n$. In particular, for $n = 0$, we have
        \begin{align*}
        \ann_R \Ext_R^1(M, \Omega M) = \bigcap_{X \in \module R} \ann_R \Ext_R^{n \geq 1}(M,X).
        \end{align*}
        This intersection also coincides with the stable annihilator $\sann_R(M)$. A proof of this can be found in \cite[Section 2]{iyengar-takahashi}.
        \item A combination of these remarks tells us that for $n \gg 0$, there is an equality 
        \begin{align*}
        \ca(R) = \sann_R(\Omega^n \module R).
        \end{align*}
        \item Assume that $R$ is a Cohen-Macaulay local ring of Krull dimension $d$. Then, there are inclusions 
        \begin{align*}
        \Omega^d \CM(R) \subseteq \Omega^d \module R \subseteq \CM(R)
        \end{align*}
        which yields, for any $n \geq 0$, inclusions 
        \begin{align*}
        \sann_R(\Omega^{n} \CM(R)) \subseteq \sann_R(\Omega^{n+d}\module R) \subseteq \sann_R(\Omega^{n+d} \CM(R)).
        \end{align*}
        Therefore, one has an equality 
        \begin{align*}
        \ca(R) = \sann_R(\Omega^n \CM(R))
        \end{align*}
        for $n \gg 0$.
        \item When $R$ is Gorenstein, there is an equality $\ca(R) = \sann_R(\CM(R))$ \cite[Lemma 2.3]{Esentepe}. Kimura showed in \cite[Theorem 1.1]{Kimura} that an equality 
        \begin{align*}
        \sqrt{\ca(R)} = \sqrt{\sann_R(\CM(R))} = \bigcap_{p \in \sing(\hat{R})} (p \cap R)
        \end{align*}
        holds for any Cohen-Macaulay local ring where $\sing(\hat{R})$ denotes the singular locus of the completion $\hat{R}$.
    \end{enumerate}
\end{remark}
Our goal is to give a sufficient condition for the equality $\ca(R) = \sann_R(\CM(R))$ to hold. We rely heavily on the following lemma.
\begin{lemma}\label{main-lemma}
    Let $R$ be a Cohen-Macaulay local ring with canonical module $\omega$ and $M$ a maximal Cohen-Macaulay $R$-module. Then, there are inclusions 
    \begin{align*}
    \sann_R(D\Omega M) \subseteq \sann_R(M) \subseteq \sann_R(\Omega M).
    \end{align*}
\end{lemma}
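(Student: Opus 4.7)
The plan is to handle the two inclusions separately. The second inclusion $\sann_R(M) \subseteq \sann_R(\Omega M)$ is essentially formal: syzygy descends to a well-defined endofunctor on the stable category of maximal Cohen-Macaulay modules, so if $r \cdot \mathrm{id}_M$ vanishes in $\sEnd_R(M)$, applying $\Omega$ shows $r \cdot \mathrm{id}_{\Omega M}$ vanishes in $\sEnd_R(\Omega M)$. I would dispose of this step in one or two sentences.

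For the harder inclusion $\sann_R(D\Omega M) \subseteq \sann_R(M)$, the idea is to use the canonical duality $D$ to trade a projective factorization of $r$ on $D\Omega M$ for an $\omega$-factorization of $r$ on $\Omega M$, and then to promote this to a projective factorization on $M$ via the defining short exact sequence $0 \to \Omega M \xrightarrow{\iota} F \xrightarrow{\pi} M \to 0$. Concretely, if $r \cdot \mathrm{id}_{D\Omega M} = g \circ f$ with $f \colon D\Omega M \to R^k$ and $g \colon R^k \to D\Omega M$, applying $D$ and using biduality $DD \cong \mathrm{id}$ on $\CM(R)$ yields
\begin{align*}
r \cdot \mathrm{id}_{\Omega M} = Df \circ Dg \colon \Omega M \xrightarrow{Dg} \omega^k \xrightarrow{Df} \Omega M.
\end{align*}
Since $M$ is MCM, $\Ext^1_R(M, \omega^k) = 0$, so $Dg$ extends along $\iota$ to a map $\tilde g \colon F \to \omega^k$. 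A short diagram chase then shows that $r \cdot \mathrm{id}_F - \iota \circ Df \circ \tilde g$ vanishes on $\Omega M$ (the two summands agree there, both restricting to $r \cdot \iota$), hence factors through $M$ as $\alpha \circ \pi$ for some $\alpha \colon M \to F$. Composing on the left with $\pi$ and using $\pi \circ \iota = 0$ yields $\pi \circ \alpha = r \cdot \mathrm{id}_M$, so $r \cdot \mathrm{id}_M$ factors through the projective $F$.

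The main obstacle is precisely this last step: one has a factorization of $r$ on $\Omega M$ through the non-projective $\omega^k$, and must upgrade it to a factorization on $M$ through a genuine projective. The MCM vanishing $\Ext^1_R(M, \omega^k) = 0$ is exactly the ingredient that makes this promotion possible, because it allows the $\omega$-valued map $Dg$ to extend across the syzygy sequence; everything else is either duality bookkeeping or the standard factoring-through-a-cokernel argument.
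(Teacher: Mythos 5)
Your proof is correct, and it takes a more hands-on route than the paper. The paper disposes of both inclusions by working with annihilators of Ext modules: it invokes the identification $\sann_R(N)=\ann_R\Ext_R^1(N,\Omega N)$ (Remark \ref{cohomology-annihilator-remarks}(2)) and, for the first inclusion, the isomorphism $\Ext_R^1(D\Omega M,DM)\cong\Ext_R^1(M,\Omega M)$ induced by the canonical duality on $\CM(R)$, so the whole proof is three lines of citations. You instead argue directly at the level of stable endomorphisms: a projective factorization of $r\cdot\mathrm{id}_{D\Omega M}$ is dualized (using $DD\cong\mathrm{id}$ on $\CM(R)$ and $D(R^k)\cong\omega^k$) into an $\omega^k$-factorization of $r\cdot\mathrm{id}_{\Omega M}$, and the vanishing $\Ext_R^1(M,\omega^k)=0$ lets you extend $Dg$ over the syzygy sequence and run the cokernel chase producing $\pi\circ\alpha=r\cdot\mathrm{id}_M$; your treatment of the second inclusion via the syzygy functor on the stable category is likewise fine and matches the paper's in substance. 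The mathematical engine is the same in both proofs --- exactness of $D$ on sequences of maximal Cohen--Macaulay modules, which ultimately rests on the same vanishing $\Ext_R^{>0}(\CM,\omega)=0$ you use --- but your version is self-contained and makes explicit where that vanishing enters, at the cost of a longer diagram chase, while the paper's version is shorter because it black-boxes the Ext-theoretic characterization of the stable annihilator and the duality isomorphism of Ext modules.
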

\begin{proof}
    By \ref{cohomology-annihilator-remarks}(2), we know that there is an inclusion 
    \begin{align*}
    \sann_R(D\Omega M) \subseteq \ann_R \Ext_R^1(D\Omega M, DM).
    \end{align*}
    On the other hand, the canonical dual gives an isomorphism 
    \begin{align*}
        \ann_R \Ext_R^1(D\Omega M, DM) = \ann_R \Ext_R^1(M, \Omega M)
    \end{align*}
    and again by Remark \ref{cohomology-annihilator-remarks}(2), the right hand side equals $\sann_R(M)$. This shows the first inclusion. The second inclusion is standard and uses similar techniques but we include a proof for completeness: 
    \begin{align*}
    \sann_R(M) \subseteq \ann_R\Ext_R^2(M, \Omega^2M) = \sann_R\Ext_R^1(\Omega M, \Omega^2 M) = \sann_R(\Omega M).
    \end{align*}
\end{proof}
The following proposition follows from Lemma \ref{main-lemma}.
\begin{proposition}\label{main-proposition}
    Let $R$ be a Cohen-Macaulay local ring with canonical module $\omega$ and $M$ a maximal Cohen-Macaulay $R$-module. If $D\Omega M \in \Omega \CM(R)$, then there is an equality 
    \begin{align*}
    \sann_R(M) = \sann_R(\Omega M).
    \end{align*}
\end{proposition}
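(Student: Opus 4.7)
By Lemma \ref{main-lemma}, the inclusion $\sann_R(M) \subseteq \sann_R(\Omega M)$ is automatic, so the task reduces to the reverse inclusion $\sann_R(\Omega M) \subseteq \sann_R(M)$. My plan is to exhibit, using the hypothesis $D\Omega M \in \Omega \CM(R)$ together with canonical duality, a MCM module $N$ and an $R$-module isomorphism
\begin{align*}
\Ext_R^1(M, \Omega M) \cong \Ext_R^2(M, N).
\end{align*}
Since $\sann_R(M) = \ann_R \Ext_R^1(M, \Omega M)$ and $\sann_R(\Omega M) = \bigcap_{X \in \module R} \ann_R \Ext_R^{\geq 2}(M, X)$ by Remark \ref{cohomology-annihilator-remarks}(2), specializing to $X = N$ will then yield the desired inclusion.

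The main tool is the canonical-duality isomorphism $\Ext_R^i(X, Y) \cong \Ext_R^i(DY, DX)$ for all $X, Y \in \CM(R)$ and $i \geq 0$. This extends the standard Hom-duality to higher Ext, and I will verify it by induction on $i$ from the facts that $D$ is exact on short exact sequences of MCM modules (since $\Ext_R^{\geq 1}(\CM(R), \omega) = 0$) and that syzygies of MCM modules remain in $\CM(R)$.

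Using this tool I construct $N$ as follows. Canonical duality identifies $\Ext_R^1(M, \Omega M) \cong \Ext_R^1(D\Omega M, DM)$. The hypothesis provides a MCM module $Z$ with $D\Omega M \cong \Omega Z$ up to projective summands (which do not affect $\Ext^{\geq 1}$), so $\Ext_R^1(D\Omega M, DM) \cong \Ext_R^1(\Omega Z, DM) \cong \Ext_R^2(Z, DM)$, the last step by dimension shift. A second application of canonical duality gives $\Ext_R^2(Z, DM) \cong \Ext_R^2(M, DZ)$, so taking $N = DZ$ completes the chain.

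The main obstacle I anticipate is the extension of canonical duality from Hom to higher Ext. In the Gorenstein case $\omega = R$ the isomorphism is immediate, but in the general Cohen-Macaulay setting it relies on the MCM-injectivity of $\omega$ and the stability of $\CM(R)$ under syzygies; once this is in hand, the proof is the short chain of identifications above.
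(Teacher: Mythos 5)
Your argument is correct: the easy inclusion comes from Lemma \ref{main-lemma}, and your chain $\Ext_R^1(M,\Omega M)\cong\Ext_R^1(D\Omega M,DM)\cong\Ext_R^1(\Omega Z,DM)\cong\Ext_R^2(Z,DM)\cong\Ext_R^2(M,DZ)$, combined with $\sann_R(\Omega M)=\bigcap_{X}\ann_R\Ext_R^{\geq 2}(M,X)\subseteq\ann_R\Ext_R^2(M,DZ)$, does give the reverse inclusion; all the modules involved ($\Omega M$, $\Omega Z$, $DZ$) are maximal Cohen-Macaulay, so each duality step is legitimate, and the projective-summand ambiguity in $\Omega Z$ is harmless for $\Ext^{\geq 1}$ as you note. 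The underlying mechanism is the same as the paper's --- canonical duality converts the hypothesis $D\Omega M\in\Omega\CM(R)$ into information about $M$ --- but the execution differs: the paper stays entirely at the level of stable annihilators and degree-one Ext, writing $D\Omega M=\Omega X$, hence $\Omega M=D\Omega X$, and then applying the first inclusion of Lemma \ref{main-lemma} to $X$ (and, implicitly, once more to $M$) to chain $\sann_R(\Omega M)=\sann_R(D\Omega X)\subseteq\sann_R(X)\subseteq\sann_R(\Omega X)=\sann_R(D\Omega M)\subseteq\sann_R(M)$, so it never needs duality beyond $\Ext^1$. Your route instead produces an explicit isomorphism between $\Ext^1(M,\Omega M)$ and a degree-two Ext of $M$ itself, which is arguably more transparent but requires the duality isomorphism $\Ext_R^i(X,Y)\cong\Ext_R^i(DY,DX)$ in degree $2$; your sketch of that verification (exactness of $D$ on short exact sequences of MCM modules via $\Ext_R^{\geq 1}(\CM(R),\omega)=0$, plus dimension shifting against a coresolution by copies of $\omega$) is the standard argument and is fine, though for the record you only need $i\leq 2$, and you should note that the isomorphisms are $R$-linear so that annihilators are preserved --- which is exactly the point the paper's Lemma \ref{main-lemma} also silently uses.
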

\begin{proof}
    By our assumption, there exists a maximal Cohen-Macaulay $R$-module $X$ such that $D\Omega M = \Omega X$. This gives us $\Omega M = D\Omega X$. Applying Lemma \ref{main-lemma} to $X$, we get 
    \begin{align*}
    \sann_R(\Omega M) = \sann_R(D \Omega X) \subseteq \sann_R(M)
    \end{align*}
    which gives us the equality as we always have the inclusion $\sann_R(M) \subseteq \sann_R(\Omega M)$.
\end{proof}
Now, we are ready to prove our main theorem. The canonical module $\omega$ (which is the dual of the regular module $R$) never belongs to $\Omega \CM(R)$ unless $R$ is Gorenstein. Therefore, we need the category $\Omega \CM^{\times}(R)$ of syzygies of maximal Cohen-Macaulay modules without free summands in what follows.
\begin{theorem}\label{main-theorem}
    Let $R$ be a Cohen-Macaulay local ring with canonical module $\omega$. If $\Omega \CM^\times(R)$ is closed under the canonical duality functor $D(-)$, then there exists an equality 
    \begin{align*}
    \ca(R) = \sann_R (\CM(R)).
    \end{align*}
\end{theorem}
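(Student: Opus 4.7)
The plan is to combine Remark \ref{cohomology-annihilator-remarks}(4) with an iterated use of Proposition \ref{main-proposition}. By Remark \ref{cohomology-annihilator-remarks}(4), for $n \gg 0$ one has
\[
\ca(R) = \sann_R(\Omega^n \CM(R)),
\]
and the inclusion $\sann_R(\CM(R)) \subseteq \ca(R)$ is automatic because $\Omega^n \CM(R) \subseteq \CM(R)$, so that intersecting stable annihilators over the smaller collection yields the larger ideal. The real task is therefore to prove the reverse containment, and for this it suffices to establish the equality $\sann_R(M) = \sann_R(\Omega M)$ for every $M \in \CM(R)$: iterating such an equality gives $\sann_R(\Omega^n M) = \sann_R(M)$, and intersecting over $M \in \CM(R)$ finishes the argument.

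To set up the invocation of Proposition \ref{main-proposition} for a given $M \in \CM(R)$, I would choose the syzygy $\Omega M$ to have no free direct summand, so that $\Omega M \in \Omega \CM^\times(R)$. This is permissible because a syzygy is only defined up to projective summands, and the stable annihilator is insensitive to such summands (one has $\sann_R(R) = R$ and $\sann_R(X \oplus R^k) = \sann_R(X)$). The closure hypothesis of the theorem then yields $D\Omega M \in \Omega \CM^\times(R) \subseteq \Omega \CM(R)$, which is precisely the condition required by Proposition \ref{main-proposition}; invoking it produces the desired equality $\sann_R(M) = \sann_R(\Omega M)$.

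The main point requiring care is exactly this choice of syzygy at each stage of the iteration: one must consistently pick the syzygy of each intermediate module $\Omega^k M$ inside $\Omega \CM^\times(R)$ so that the closure hypothesis can be applied to it, while keeping track that the stable annihilator does not change when one strips off free summands. I expect no further conceptual obstacle beyond this bookkeeping, and the result then follows by chaining the equalities $\sann_R(M) = \sann_R(\Omega M) = \cdots = \sann_R(\Omega^n M)$ and intersecting over $M \in \CM(R)$.
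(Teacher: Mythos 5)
Your argument is correct and is essentially the paper's own proof: reduce via Remark \ref{cohomology-annihilator-remarks}(4) to comparing $\sann_R(\Omega^n \CM(R))$ with $\sann_R(\CM(R))$, and obtain $\sann_R(M)=\sann_R(\Omega M)$ for every maximal Cohen-Macaulay module $M$ from Proposition \ref{main-proposition}, then iterate and intersect. The only difference is that you make explicit the free-summand bookkeeping (choosing the syzygy inside $\Omega \CM^\times(R)$ and using that $\sann_R$ is unchanged by stripping free summands) needed to feed the closure hypothesis into Proposition \ref{main-proposition}, a point the paper's proof leaves implicit.
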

\begin{proof}
    By Remark \ref{cohomology-annihilator-remarks}(4), we know that for $n \gg 0$ one has an equality 
    \begin{align*}
        \ca(R) = \sann_R(\Omega^n \CM(R)).
    \end{align*}
    We will prove that $\sann_R(\Omega^n \CM(R)) = \sann_R(\CM(R))$ for any $n \geq 0$ and this will give us the desired equality. However, we see from Proposition \ref{main-proposition} that $\sann_R(M) = \sann_R(\Omega M)$ for any maximal Cohen-Macaulay module $M$. Hence, the ideals that appear in the intersection $\sann_R(\CM(R))$ agree with those that appear in the intersection $\sann_R(\Omega^n \CM(R))$ for any $n \geq 0$.  
\end{proof}
\begin{remark}\label{Kobayashi-Takahashi-remark}
    In \cite{Kobayashi-Takahashi}, Kobayashi and Takahashi studies Ulrich modules over Cohen-Macaulay local rings with minimal multiplicity. Let $(R, \m, k)$ be a $d$-dimensional Cohen-Macaulay local ring with minimal multiplicity. They show that the category $\Omega \CM^{\times}(R)$ is contained in the category $\mathrm{Ul}(R)$ of Ulrich $R$-modules. Then, they show that the equality occurs exactly when our condition is satisfied \cite[Theorem B]{Kobayashi-Takahashi}. More precisely, they show that the following two conditions are equivalent for a Cohen-Macaulay local ring with minimal multiplicity: 
    \begin{enumerate}
        \item The category $\Omega \CM^\times(R)$ is closed under the canonical dual $D(-)$.
        \item There is an equality $\Omega \CM^\times (R) = \mathrm{Ul(R)}$.   
    \end{enumerate}
    They also show that when $k$ infinite and $d$ is positive, these equivalent conditions imply that $R$ is almost Gorenstein (See Definition \ref{almost-Gorenstein-definition}). They also consider the condition 
    \begin{enumerate}
        \setcounter{enumi}{2}
        \item There is an equality $\sann_R(D\Omega^d k) = \m$. 
    \end{enumerate}
    They show that (3) implies (2) (and hence (1)). Below we will show that in fact (3) is equivalent to (2) (and hence (1)). They show this equivalence for $d=1$ with $k$ infinite: in this case, the three equivalent conditions are satisfied if and only if $R$ is almost Gorenstein. They also show the equivalence of (1) and (3) for certain surface singularities.
\end{remark}
The following example follows from the work of Kobayashi and Takahashi \cite[Example 3.12]{Kobayashi-Takahashi}.
\begin{example}
    Let $S = \CC[[x,y,z]]$ be a formal power series ring. Let $G$ be the cyclic group $\dfrac{1}{3}(1,1,1)$. Then, the invariant ring $R = S^G$ (the second Veronese subring of $S$) has three indecomposable maximal Cohen-Macaulay modules up to isomorphism: $R, \omega, \Omega \omega$. Since $R$ is singular, we have two options for $\ca(R)$. It is either $\sann_R(\omega)$ or $\sann_R(\Omega \omega)$. However, by \cite[Example 3.12]{Kobayashi-Takahashi}, we know that $\mathrm{Ul}(R) = \Omega \CM^\times(R)$. Hence, $\Omega \CM^\times(R)$ is closed under $D(-)$. Hence, we actually have 
    \begin{align*}
        \ca(R) = \sann_R(\omega) = \sann_R(\Omega \omega).
    \end{align*}        
    Note that $\sann_R(\omega)$ is the canonical trace ideal (see the next section) and $R$ is nearly Gorenstein. So, we conclude $\ca(R) = \m$.
\end{example}
We now prove the equivalence of (1) and (3) in Remark \ref{Kobayashi-Takahashi-remark}.
\begin{proposition}
    Let $R$ be a $d$-dimensional Cohen-Macaulay singular local ring with minimal multiplicity and with canonical module $\omega$. Then, the following are equivalent.
    \begin{enumerate}
        \item There is an equality $\sann_R(D\Omega^d k) = \m$.
        \item The category $\Omega \CM^\times(R)$ is closed under the canonical dual $D(-)$.
        \item The module $D \Omega^d k$ belongs to $\Omega \CM(R)$.
    \end{enumerate}
\end{proposition}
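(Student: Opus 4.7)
The plan is to prove the equivalence via the cycle $(1) \Rightarrow (2) \Rightarrow (3) \Rightarrow (1)$. The implication $(1) \Rightarrow (2)$ is supplied by Kobayashi--Takahashi as recalled in Remark~\ref{Kobayashi-Takahashi-remark}, so the real work lies in the remaining two implications.

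For $(2) \Rightarrow (3)$, my key input will be that the $d$th syzygy $\Omega^d k$ is automatically an Ulrich $R$-module under minimal multiplicity. I would justify this by choosing a minimal reduction $\boldsymbol{f} = (f_1, \ldots, f_d)$ of $\m$ with $\m^2 = \boldsymbol{f}\m$ and reducing inductively modulo one parameter at a time: $\Omega^d k$ is maximal Cohen--Macaulay so $\boldsymbol{f}$ is regular on it, and over the Artinian quotient $R/\boldsymbol{f}$ (whose maximal ideal squares to zero) every syzygy of $k$ is a $k$-vector space, which lifts to $\m \Omega^d k \subseteq \boldsymbol{f} \Omega^d k$. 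Then the Kobayashi--Takahashi equivalence $(2) \Leftrightarrow \Omega \CM^\times(R) = \Ul(R)$ recalled in Remark~\ref{Kobayashi-Takahashi-remark} places $\Omega^d k$ in $\Omega \CM^\times(R)$, and the hypothesis that $\Omega \CM^\times(R)$ is closed under $D$ puts $D \Omega^d k$ in $\Omega \CM^\times(R) \subseteq \Omega \CM(R)$.

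For $(3) \Rightarrow (1)$, I would write $D \Omega^d k = \Omega N$ for some $N \in \CM(R)$ and apply Lemma~\ref{main-lemma} to $N$. Since $D$ is a duality on $\CM(R)$ we have $D \Omega N = D(D \Omega^d k) = \Omega^d k$, so the chain $\sann_R(D \Omega N) \subseteq \sann_R(N) \subseteq \sann_R(\Omega N)$ reads $\sann_R(\Omega^d k) \subseteq \sann_R(N) \subseteq \sann_R(D \Omega^d k)$. Because $\Ext_R^i(\Omega^d k, X) \cong \Ext_R^{i+d}(k, X)$ is annihilated by $\m$ for all $i \geq 1$ and all $X$, we obtain $\m \subseteq \sann_R(\Omega^d k)$ and hence $\m \subseteq \sann_R(D \Omega^d k)$. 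For the reverse inclusion I would check that $D$ preserves the Ulrich property on $\CM(R)$ via a short computation using $\Ext_R^{\geq 1}(M, \omega) = 0$ for MCM $M$ together with $\mathrm{soc}(\omega/\boldsymbol{f}\omega) \cong \Ext_R^d(k, \omega) \cong k$; this makes $D \Omega^d k$ Ulrich, and since $R$ is singular no free $R$-module can be Ulrich, so $\sann_R(D \Omega^d k) \subsetneq R$ and therefore lies in $\m$.

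The main obstacle will be the Ulrich identification of $\Omega^d k$ (and its preservation by $D$) under minimal multiplicity; once these are in hand, the chain from Lemma~\ref{main-lemma}, the $\m$-annihilation of $\Ext_R^{\geq 1}(k, -)$, and the non-freeness of Ulrich modules over singular rings combine cleanly to close the cycle.
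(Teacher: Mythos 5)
Your architecture matches the paper's: the cycle $(1)\Rightarrow(2)\Rightarrow(3)\Rightarrow(1)$, with Kobayashi--Takahashi quoted for $(1)\Rightarrow(2)$ and the chain from Lemma~\ref{main-lemma} driving $(3)\Rightarrow(1)$. The genuine gap is your central claim that $\Omega^d k$ is Ulrich (equivalently, for your purposes, that it lies in $\Omega\CM^\times(R)$), which you use both in $(2)\Rightarrow(3)$ and, via preservation of Ulrichness under $D$, for the properness step in $(3)\Rightarrow(1)$. The reduction argument you sketch does not establish it: since $\boldsymbol{f}$ is regular on neither $k$ nor $\Omega^i k$ for $i<d$, the module $\Omega^d_R k/\boldsymbol{f}\,\Omega^d_R k$ is not a syzygy of $k$ over the Artinian ring $R/\boldsymbol{f}$ (syzygies commute with reduction only modulo elements regular on the module being resolved). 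What the standard minimal-multiplicity argument gives is that a minimal first syzygy of a \emph{maximal Cohen--Macaulay} module is Ulrich, i.e.\ $\Omega\CM^\times(R)\subseteq\Ul(R)$, which covers $\Omega^i M$ for $i\geq d+1$ but not $\Omega^d k$: there $\Omega^{d-1}k$ has depth $d-1$, so the needed vanishing $\Tor_1^R(\Omega^{d-1}k,R/\boldsymbol{f})=0$, which keeps $\Omega^d k/\boldsymbol{f}\,\Omega^d k\to F/\boldsymbol{f}F$ injective, fails in general. (A minor further point: a reduction with $\m^2=\boldsymbol{f}\m$ requires the residue field to be infinite, which is not assumed.)

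The fact you want is true, but it needs an input you have not supplied; this is exactly where the paper invokes Dey--Takahashi: $\Omega^d k$ is $(d+1)$-torsionfree, hence lies in $\Omega^{d+1}\module R\subseteq\Omega\CM(R)$, and then Kobayashi--Takahashi's containment $\Omega\CM^\times(R)\subseteq\Ul(R)$ yields the Ulrich statement if you insist on it. This input also repairs $(3)\Rightarrow(1)$ without any Ulrich considerations: writing $\Omega^d k=\Omega Y$ with $Y\in\CM(R)$ and $D\Omega^d k=\Omega X$ as in (3), Lemma~\ref{main-lemma} applied to $Y$ gives $\sann_R(D\Omega^d k)\subseteq\sann_R(\Omega^d k)$ and applied to $X$ gives the reverse inclusion, so $\sann_R(D\Omega^d k)=\sann_R(\Omega^d k)=\m$, the last equality because $\m$ annihilates $\Ext^{d+1}_R(k,-)$ and $\Omega^d k$ is not free as $R$ is singular; this is precisely the paper's appeal to the proof of Proposition~\ref{main-proposition}. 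So either cite a result placing $\Omega^d k$ in $\Omega\CM(R)$ (or in $\Ul(R)$) or follow the torsionfree route; as written, the key claim is unsupported and your $(2)\Rightarrow(3)$ and the $\sann_R(D\Omega^d k)\subseteq\m$ half of $(3)\Rightarrow(1)$ both rest on it.
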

\begin{proof}
    The implication (1) $\implies$ (2) is from \cite[Corollary 3.11]{Kobayashi-Takahashi} as previously mentioned. The implication (2) $\implies$ (3) follows from the fact that we have inclusions
    \begin{align*}
    \Omega^d k \in \mathrm{TF}_{d+1}(R) \subseteq \Omega^{d+1}{\module R} \subseteq \Omega \CM(R)
    \end{align*}
    where $\mathrm{TF}_{d+1}(R)$ is the category of $(d+1)$-torsionfree modules. In full generality, this was proved by Dey-Takahashi \cite[Theorem 4.1(2)]{Dey-Takahashi}. Now assume that $D\Omega^d k \in \Omega\CM(R)$. Then, by the proof of Proposition \ref{main-proposition}, we have equalities 
    \begin{align*}
    \sann_R(D \Omega^d k) = \sann_R(\Omega^d k) = \m.
    \end{align*}
     
\end{proof}
This discussion regarding rings with minimal multiplicity gives us the following corollary.
\begin{corollary}
    Let $R$ be a Cohen-Macaulay local ring with canonical module $\omega$. Assume that $R$ has Krull dimension $d$ and that it has minimal multiplicity. If $D \Omega^d k \in \Omega \CM(R)$, then we have 
    \begin{align*}
    \ca(R) = \sann_R(\CM(R)).
    \end{align*}
\end{corollary}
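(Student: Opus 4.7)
The plan is to reduce the corollary to Theorem \ref{main-theorem} through the preceding proposition, after first disposing of the regular case separately (the proposition is stated under the standing assumption that $R$ is singular, so it does not apply verbatim).

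If $R$ is regular, the conclusion is trivial: every maximal Cohen-Macaulay module over a regular local ring is free by the Auslander--Buchsbaum formula, so $\sEnd_R(M)=0$ for all $M\in\CM(R)$, whence $\sann_R(\CM(R))=R$; and since every finitely generated $R$-module has finite projective dimension, $\ca(R)=R$ as well. So I may assume $R$ is singular.

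In the singular case, the plan is to invoke the previous proposition directly. The hypothesis $D\Omega^d k\in\Omega\CM(R)$ is exactly condition (3) of that proposition, and the implication $(3)\Rightarrow(2)$ established there yields that $\Omega\CM^\times(R)$ is closed under the canonical duality functor $D(-)$. This is precisely the hypothesis of Theorem \ref{main-theorem}, which then delivers the equality $\ca(R)=\sann_R(\CM(R))$.

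The main obstacle is bookkeeping rather than mathematical: one must notice that the minimal multiplicity hypothesis together with $D\Omega^d k\in\Omega\CM(R)$ is exactly what is needed to feed the previous proposition into Theorem \ref{main-theorem}, and that the regular case -- which is excluded from the proposition -- is vacuous and must be dispatched separately. No new ingredients beyond the results already assembled in this section are required.
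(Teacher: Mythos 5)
Your proposal is correct and takes essentially the same route the paper intends: the hypothesis $D\Omega^d k \in \Omega\CM(R)$ is condition (3) of the preceding proposition, whose equivalence with condition (2) supplies exactly the hypothesis of Theorem \ref{main-theorem}, which then gives $\ca(R)=\sann_R(\CM(R))$. Your separate dispatch of the regular case (where both ideals are trivially $R$) is a harmless extra precision accounting for the word ``singular'' in the proposition's statement, which the paper leaves implicit.
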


We finish this section with another application. We remark that the intersection on the right hand side of the equality in the proposition below is a \textit{cocohomological} annihilator in the sense of Dey-Liu-Mifune-Otake \cite{Dey-Liu-Mifune-Otake}.
\begin{proposition}
    Let $R$ be Cohen-Macaulay local ring with canonical module $\omega$ and $M$ be a maximal Cohen-Macaulay $R$-module. If both $M$ and $DM$ belong to $\Omega \CM(R)$, then there exists an equality
    \begin{align*}
        \bigcap_{X \in \CM(R)} \ann_R \Ext_R^{>0}(M,X) = \bigcap_{X \in \CM(R)} \ann_R \Ext_R^{>0}(X,M)
    \end{align*}
    of ideals.
\end{proposition}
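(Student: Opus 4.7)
The plan is to reduce both sides of the desired equality to stable annihilators and then invoke Lemma \ref{main-lemma} twice. First I would observe that since syzygies of maximal Cohen-Macaulay modules are again maximal Cohen-Macaulay, the dimension-shifting argument in Remark \ref{cohomology-annihilator-remarks}(2) (via short exact sequences $0 \to \Omega X \to R^{n} \to X \to 0$, whose kernels stay in $\CM(R)$) goes through when the intersection is restricted to $\CM(R)$. This yields, for any $M \in \CM(R)$, the identity
\begin{align*}
\bigcap_{X \in \CM(R)} \ann_R \Ext_R^{>0}(M, X) = \ann_R \Ext_R^1(M, \Omega M) = \sann_R(M).
\end{align*}

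Next I would use that $D = \Hom_R(-, \omega)$ is an exact contravariant self-duality of $\CM(R)$ (exactness following from $\Ext_R^{>0}(Y, \omega) = 0$ for $Y \in \CM(R)$), which supplies natural isomorphisms $\Ext_R^{i}(X, M) \cong \Ext_R^{i}(DM, DX)$ for all $X, M \in \CM(R)$ and all $i \geq 1$. As $X$ ranges over $\CM(R)$, so does $Y := DX$, so applying the previous step to $DM \in \CM(R)$ gives
\begin{align*}
\bigcap_{X \in \CM(R)} \ann_R \Ext_R^{>0}(X, M) = \bigcap_{Y \in \CM(R)} \ann_R \Ext_R^{>0}(DM, Y) = \sann_R(DM).
\end{align*}

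It therefore remains to show $\sann_R(M) = \sann_R(DM)$. By the hypothesis $M \in \Omega \CM(R)$ I may write $M = \Omega A$ for some $A \in \CM(R)$, and Lemma \ref{main-lemma} applied to $A$ produces
\begin{align*}
\sann_R(DM) = \sann_R(D\Omega A) \subseteq \sann_R(A) \subseteq \sann_R(\Omega A) = \sann_R(M).
\end{align*}
Symmetrically, using $DM \in \Omega \CM(R)$, write $DM = \Omega B$ with $B \in \CM(R)$; Lemma \ref{main-lemma} applied to $B$, combined with the identity $D(DM) \cong M$ (valid since $M$ is maximal Cohen-Macaulay), gives $\sann_R(M) = \sann_R(D\Omega B) \subseteq \sann_R(B) \subseteq \sann_R(DM)$. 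The two inclusions combine to the desired equality.

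The only real subtlety is the first step, namely the verification that restricting the running index in the intersection from $\module R$ to $\CM(R)$ does not shrink it; once that is in place, everything else is formal from the canonical duality and Lemma \ref{main-lemma}, and I do not expect a genuine obstacle.
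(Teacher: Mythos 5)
Your proposal is correct and follows essentially the same route as the paper: you identify the left- and right-hand intersections with $\sann_R(M)$ and $\sann_R(DM)$ (using $\Omega M \in \CM(R)$, Remark \ref{cohomology-annihilator-remarks}(2), and the exact duality $D$ on $\CM(R)$), and then deduce $\sann_R(M) = \sann_R(DM)$ from Lemma \ref{main-lemma}. Your final step, applying the lemma symmetrically to $A$ and $B$ with $M = \Omega A$ and $DM = \Omega B$, is exactly what the paper's citation of the proof of Proposition \ref{main-proposition} unwinds to.
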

\begin{proof}
    We note that as $M \in \CM(R)$, we also have that $\Omega_RM \in \CM(R)$. Hence, there is an inclusion 
    \begin{align*}
    \bigcap_{X \in \CM(R)} \ann_R \Ext_R^{>0}(M,X) \subseteq \ann_R\Ext_R^1(M, \Omega_R M) = \sann_R(M).
    \end{align*}
    On the other hand as $\CM(R) \subseteq \module(R)$, we have another inclusion 
    \begin{align*}
    \sann_R(M) = \bigcap_{X \in \module(R)} \ann_R \Ext_R^{>0}(M,X) \subseteq \bigcap_{X \in \CM(R)} \ann_R \Ext_R^{>0}(M,X)
    \end{align*}
    which means that the intersection on the left hand side of the equality in the statement is just the stable annihilator of $M$. 
    
    Since $D$ is a duality on $\CM(R)$, we also have equalities
    \begin{align*}
        \bigcap_{X \in \CM(R)} \ann_R \Ext_R^{>0}(X,M) = \bigcap_{X \in \CM(R)} \ann_R \Ext_R^{>0}(DX,M) = \bigcap_{X \in \CM(R)} \ann_R \Ext_R^{>0}(DM,X).
    \end{align*}
    Hence, the right hand side of the equality in the statement is $\sann_R(DM)$. Now, we are done by the proof of Proposition \ref{main-proposition}.
\end{proof}

\section{When is the dual of the syzygy of an MCM module the syzygy of an MCM module?}

The main ingredient of the main theorem in Section 2 was the fact that if a module $M$ and its canonical dual $DM$ are both syzygies of maximal Cohen-Macaulay modules over a Cohen-Macaulay local ring with canonical module $\omega$, then one has an equality $\sann_R(M) = \sann_R(DM)$ of stable annihilators. In this section, we will investigate this property. Our main tool will be trace ideals which we recall next.
\begin{definition}
    Let $R$ be a commutative Noetherian ring and $M$ be a finitely generated $R$-module. Then, the \textit{trace ideal} $\tr(M)$ of $M$ is the ideal of $R$ generated by the elements of the form $f(m)$ where $f \in M^*$ and $m \in M$. 
\end{definition}
We recall some properties of trace ideals \cite[Proposition 2.8]{Lindo-centers}.
\begin{remark}\label{remark-trace}
    Let $R$ be a commutative Noetherian local ring and $M, N$ finitely generated $R$-modules.
    \begin{enumerate}
        \item One has $\tr(M) = R$ if and only if $M$ contains a nonzero direct summand.
        \item There is an inclusion $\tr(N) \subseteq \tr(M)$ if $M$ generates $N$ (that is, if there exists a surjection $M^{\oplus m} \twoheadrightarrow N$).  
    \end{enumerate}
\end{remark}
We start with recording a straightforward observation.
\begin{lemma}\label{lemma-where-the-trace-is-contained-in-canonical-trace}
    Let $R$ be a Cohen-Macaulay local ring with canonical module $\omega$ and $M$ be a maximal Cohen-Macaulay $R$-module such that $DM \in \Omega \CM(R)$. Then, there exists an inclusion $\tr(M) \subseteq \tr(\omega)$.  
\end{lemma}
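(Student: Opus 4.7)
The plan is to dualize the defining short exact sequence for $DM$ as a syzygy, use that the canonical dual is exact and involutive on maximal Cohen-Macaulay modules, and then conclude via the basic behavior of trace ideals under surjections recorded in Remark \ref{remark-trace}.

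More precisely, first I would use the hypothesis $DM\in\Omega\CM(R)$ to fix a maximal Cohen-Macaulay module $N$ and an exact sequence
\begin{equation*}
0\longrightarrow DM\longrightarrow R^{n}\longrightarrow N\longrightarrow 0,
\end{equation*}
where $R^{n}$ is the projective cover (or any free module surjecting onto $N$). All three terms are maximal Cohen-Macaulay, so the functor $D=\Hom_R(-,\omega)$ is exact on this sequence. Applying $D$, and using the standard identifications $D(R^{n})\cong\omega^{n}$ and $DDM\cong M$ (valid since $M\in\CM(R)$), one obtains a short exact sequence
\begin{equation*}
0\longrightarrow DN\longrightarrow \omega^{n}\longrightarrow M\longrightarrow 0.
\end{equation*}

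The second step is to read off from this sequence that $\omega$ generates $M$, i.e.\ there is a surjection $\omega^{\oplus n}\twoheadrightarrow M$. By Remark \ref{remark-trace}(2), this forces $\tr(M)\subseteq \tr(\omega^{n})$, and since trace ideals are additive on direct sums one has $\tr(\omega^{n})=\tr(\omega)$. Combining these gives the required inclusion $\tr(M)\subseteq\tr(\omega)$.

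There is essentially no obstacle here beyond verifying that $D$ behaves as expected: namely, that $DM$ being a syzygy forces $DM\in\CM(R)$ (automatic, since $M\in\CM(R)$ and $D$ restricts to a duality on $\CM(R)$), and that $D$ preserves the exactness of a short exact sequence with all three terms in $\CM(R)$. Both facts are standard properties of the canonical duality, so the argument reduces to the two-line dualization above followed by the trace-ideal observation.
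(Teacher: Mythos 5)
Your proposal is correct and follows essentially the same route as the paper: take the syzygy sequence $0\to DM\to F\to X\to 0$ with all terms maximal Cohen-Macaulay, apply the canonical dual to obtain a surjection $\omega^{\oplus n}\twoheadrightarrow M$ (using $DDM\cong M$ and $\Ext_R^1(X,\omega)=0$), and conclude by Remark \ref{remark-trace}(2). Your extra verifications (exactness of $D$ on sequences of maximal Cohen-Macaulay modules, $\tr(\omega^{n})=\tr(\omega)$) are harmless elaborations of steps the paper leaves implicit.
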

\begin{proof}
    There is a short exact sequence 
    \begin{align*}
    0 \to DM \to F \to X \to 0
    \end{align*}
    where $F$ is a free $R$-module. Dualising, we get a surjection 
    \begin{align*}
    \omega^{\oplus n} \to M \to 0
    \end{align*}
    for some $n \geq 1$. Therefore, by Remark \ref{remark-trace}(2), we get an inclusion $\tr(M) \subseteq \tr(\omega)$.
\end{proof}
With this lemma, it is natural to ask whether the converse holds. More precisely, we have the following question.
\begin{question}
    Let $R$ be a Cohen-Macaulay local ring with canonical module $\omega$ and $M$ be a maximal Cohen-Macaulay $R$-module such that $\tr(M) \subseteq \tr(\omega)$. Does it follow that the canonical dual $DM$ belongs to $\Omega \CM(R)$? 
\end{question}
We have a positive answer for this in dimension one when the ring has canonical reduction number $2$. As a preparation, we recall some definitions and some known facts.
\begin{definition}
    Let $R$ be a commutative Noetherian local ring and $Q(R)$ its total quotient ring. Denote by $\overline{R}$ the normalisation (i.e. the integral closure) of $R$ inside $Q(R)$. Then, the \textit{conductor} ideal is defined as 
    \begin{align*}
    \co(R) = \{r \in \overline{R} \colon r \overline{R} \subseteq R \}.
    \end{align*}
    More generally, for any birational extension $R \subseteq S$, one can define 
    \begin{align*}
    \co(R,S) = \{r \in S \colon r S \subseteq R \}.
    \end{align*}    
\end{definition}
Here are some facts regarding conductor ideals.
\begin{remark}\label{conductor-remarks}
    Let $R$ be a one dimensional Cohen-Macaulay local ring.
    \begin{enumerate}
        \item The conductor ideal $\co(R)$ is the largest common ideal of $R$ and $\overline{R}$.
        \item The conductor ideal $\co(R)$ is finitely generated as an $R$-module if and only if $R$ is analytically unramified (that is, the completion of $R$ is reduced). In this case, $R$ is itself reduced and it possesses a canonical ideal (an ideal which is isomorphic to a canonical module).
        \item The conductor ideal $\co(R,S)$ is equal to the stable annihilator ideal $\sann_R(S)$ when $S$ is a birational extension which is finitely generated as an $R$-module. A proof of this was given in \cite[Corollary 4.2]{Esentepe} with $S=\overline{R}$ in the case $R$ is Gorenstein but the proof works verbatim in our generality.
        \item As mentioned in the Introduction, Wang proved in \cite[Section 3]{Wang} that the conductor ideal $\co(R)$ annihilates $\Ext_R^1(M,N)$ for any two maximal Cohen-Macaulay $R$-modules $M$ and $N$ provided that $R$ is reduced and complete.
    \end{enumerate}
\end{remark}
\begin{definition}
    Let $R$ be a commutative Noetherian ring and $I$ a regular ideal. Then, there is a filtration of endomorphism algebras 
    \begin{align*}
    R \subseteq I \colon I \subseteq I^2 \colon I^2 \subseteq \cdots I^n \colon I^n \subseteq \cdots \subseteq Q(A).
    \end{align*}
    We put 
    \begin{align*}
    R^I = \bigcup_{n \geq 0} I^n \colon I^n
    \end{align*}        
    and denote by $b(I)$ the conductor ideal $\co(R,R^I)$.
\end{definition}
\begin{remark}\label{blow-up-remarks}
    Assume that $R$ is a commutative Noetherian ring of dimension one and $I$ a regular ideal.
    \begin{enumerate}
        \item The ring $R^I$ coincides with the blowup algebra $B(I)$ at $I$. This was proved by Lipman in his influencial work \cite{Lipman} on Arf rings.
        \item The ideal $b(I)$ equals the trace ideal $\tr_R(R^I)$. This follows from \cite[Proposition 2.4]{Kobayashi-Takahashi-Trace}.
        \item There exists an inclusion $b(I) \subseteq \tr(I)$. The equality holds if and only if there exists an isomorphism $\tr(I) \cong I^*$. This was proved by Dao-Lindo in \cite[Proposition 4.6 and Proposition 4.7]{Dao-Lindo}.
    \end{enumerate}
\end{remark}
Next, we will talk about $I$-Ulrich modules introduced by Dao-Maitra-Sridhar \cite{Dao-Maitra-Sridhar}. Ulrich modules have been an important topic in commutative algebra over four decades now. They are maximally generated maximal Cohen-Macaulay modules and they are named after Bernd Ulrich who initiated theire systematical study in \cite{Ulrich}. The following definition is an equivalent condition for being an $I$-Ulrich module \cite[Theorem 4.6]{Dao-Maitra-Sridhar}. 
\begin{definition}
    Let $R$ be a one dimensional Cohen-Macaulay local ring and $I$ a regular ideal. Then a maximal Cohen-Macaulay $R$-module is called \textit{$I$-Ulrich} if there is an isomorphism $IM \cong M$.
\end{definition}
\begin{remark}\label{i-ulrich-modules}
    Let $R$ be a one dimensional Cohen-Macaulay local ring and $I$ be a regular ideal. 
    \begin{enumerate}
        \item Assume that $S$ is a birational extension of $R$ and $M$ a finitely generated $R$-module. If $M$ is a module over $S$, that it the action of $R$ on $M$ extends to an action of $S$, then there exists an inclusion $\tr(M) \subseteq \tr(S)$. The converse holds when $M$ is reflexive. In fact, this statement holds for any Noetherian ring \cite[Theorem 2.9]{Dao-Maitra-Sridhar}.
        \item A module $M$ is $I$-Ulrich if and only if $M \in \CM(B(I))$ \cite[Theorem 4.6]{Dao-Maitra-Sridhar}. This has the consequence that if $M$ is an $I$-Ulrich module, then one has an inclusion $\tr(M) \subseteq b(I)$ and the converse holds when $M$ is reflexive.
        \item If $M$ is an $I$-Ulrich module, then for any $X \in \CM(R)$, the module $\Hom_R(M,X)$ is also an $I$-Ulrich module \cite[Lemma 4.15]{Dao-Maitra-Sridhar}.
        \item If $R$ is analytically unramified, then the normalisation $\overline{R}$ is $I$-Ulrich.
    \end{enumerate}
\end{remark}
Recall that a Cohen-Macaulay local ring $R$ is generically Gorenstein if for every minimal prime $p$ of $R$ the local ring $R_p$ is Gorenstein. In this case, the ring $R$ admits a canonical module $\omega$. In fact, the generically Gorenstein condition is equivalent to saying that the canonical module $\omega$ is isomorphic to an ideal. When we take $\omega$ to be an ideal we will refer to it as a canonical ideal. 
\begin{remark}\label{omega-Ulrich-remarks}
    Let $R$ be a one dimensional generically Gorenstein local ring with canonical ideal $\omega$.
    \begin{enumerate}
        \item There is an equality $\reflexive(R) = \Omega \CM(R)$. That is, an $R$-module $M$ is reflexive if and only if it is the syzygy of a maximal Cohen-Macaulay module.
        \item An $R$-module $M$ is $\omega$-Ulrich if and only if there is an isomorphism $M^* \cong DM$ \cite[Corollary 4.27]{Dao-Maitra-Sridhar}.
        \item For $n \gg 0$, the ideal $\omega^n$ is $\omega$-Ulrich. 
        \item The maximal ideal $\m$ is $\omega$-Ulrich if and only if $R$ is almost Gorenstein (See Definition \ref{almost-Gorenstein-definition}). This was stated in \cite[Proposition 7.2]{Dao-Maitra-Sridhar} but a proof also follows from the proof of \cite[Theorem 3.12]{Kumashiro}.
    \end{enumerate}
\end{remark}
\begin{proposition}\label{proposition-omega-ulrich}
    Let $R$ be a one dimensional Cohen-Macaulay local ring.
    \begin{enumerate}
        \item Assume that $M$ is $\omega$-Ulrich. Then, its dual $DM$ is in $\Omega\CM(R)$.
        \item Assume that $\Omega M$ is $\omega$-Ulrich. Then, there is an equality 
        \begin{align*}
        \sann_R(M) = \sann_R(\Omega M).
        \end{align*}
    \end{enumerate}
\end{proposition}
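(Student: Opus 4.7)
The plan is to prove part (1) directly from the defining condition of an $\omega$-Ulrich module by a dualisation argument, and then to read off part (2) immediately from part (1) combined with Proposition \ref{main-proposition}.

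For part (1), I would begin with the defining equality $\omega M = M$. Picking generators $m_1, \dots, m_n$ of $M$, this equality produces a surjection $\omega^{\oplus n} \twoheadrightarrow M$, which I would fit into a short exact sequence
\[
0 \to K \to \omega^{\oplus n} \to M \to 0.
\]
Since the middle and right-hand terms are maximal Cohen--Macaulay, the depth lemma forces $K$ to be MCM as well. I would then apply the canonical duality $D = \Hom_R(-,\omega)$, using the vanishing $\Ext_R^1(M,\omega) = 0$ (which holds because $M$ is MCM) and the canonical isomorphism $D\omega \cong R$. The outcome is a short exact sequence
\[
0 \to DM \to R^{\oplus n} \to DK \to 0
\]
with $DK$ maximal Cohen--Macaulay, exhibiting $DM$ as a first syzygy of an MCM module, i.e.\ $DM \in \Omega\CM(R)$.

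For part (2), the point is that the hypothesis is tailored so that part (1) applies verbatim to $\Omega M$ in place of $M$. This gives $D\Omega M \in \Omega\CM(R)$, at which point Proposition \ref{main-proposition}, applied to $M$, immediately yields the desired equality $\sann_R(M) = \sann_R(\Omega M)$.

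The only subtle point I anticipate is the identification $D\omega \cong R$ used in part (1), which requires $R$ to be generically Gorenstein. This is automatic in our setting: the framework of $\omega$-Ulrich modules tacitly realises $\omega$ as a regular ideal of $R$, and in dimension one this is equivalent to the generically Gorenstein condition (cf.\ the discussion preceding Remark \ref{omega-Ulrich-remarks}). Beyond this, the argument is essentially formal, driven by the depth lemma applied to the $\omega$-presentation and the Ext-vanishing $\Ext_R^{>0}(M,\omega) = 0$ for MCM modules.
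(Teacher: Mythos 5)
Your proof is correct, but for part (1) you take a genuinely different route from the paper. The paper simply invokes Remark \ref{omega-Ulrich-remarks}(2) (the Dao--Maitra--Sridhar characterisation that $M$ is $\omega$-Ulrich if and only if $M^*\cong DM$) and then notes that $M^*$, being a second syzygy (of the Auslander transpose of $M$), is the syzygy of a maximal Cohen--Macaulay module in dimension one. You instead argue directly from the defining isomorphism $\omega M\cong M$: choosing generators yields a surjection $\omega^{\oplus n}\twoheadrightarrow M$ whose kernel $K$ is maximal Cohen--Macaulay by the depth lemma, and applying $D=\Hom_R(-,\omega)$, using $\Ext_R^1(M,\omega)=0$ and $D(\omega^{\oplus n})\cong R^{\oplus n}$, realises $DM$ as the kernel of $R^{\oplus n}\twoheadrightarrow DK$ with $DK\in\CM(R)$, hence $DM\in\Omega\CM(R)$. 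Your version is self-contained (it avoids citing \cite[Corollary 4.27]{Dao-Maitra-Sridhar}) at the cost of a short dualisation argument, which is in fact precisely the reverse of the dualisation used in Lemma \ref{lemma-where-the-trace-is-contained-in-canonical-trace}; the paper's proof is shorter given the quoted remark. One small correction to your ``subtle point'': the isomorphism $D\omega=\Hom_R(\omega,\omega)\cong R$ holds for every Cohen--Macaulay local ring admitting a canonical module and does not need $R$ to be generically Gorenstein; that hypothesis enters the setting only because $\omega$-Ulrich is defined with respect to a regular (canonical) ideal, so flagging it does no harm but it is not what the isomorphism rests on. Part (2) is handled exactly as in the paper: part (1) applied to $\Omega M$ gives $D\Omega M\in\Omega\CM(R)$, and Proposition \ref{main-proposition} yields $\sann_R(M)=\sann_R(\Omega M)$.
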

\begin{proof}
    The first assertion follows from Remark \ref{omega-Ulrich-remarks}(2). Indeed, $M^*$ is a second syzygy (more precisely, the second syzygy of the Auslander transpose of $M$) and hence the syzygy of a maximal Cohen-Macaulay module. As $DM$ is isomorphic to $M^*$ and $\Omega \CM(R)$ is closed under isomorphisms, we are done. The second assertion then follows from Proposition \ref{main-proposition}.
\end{proof}

The following definition is due to Kumashiro \cite[Definition 2.6]{Kumashiro}.
\begin{definition}
    Let $R$ be a generically Gorenstein Cohen-Macaulay local ring. The infimum of all nonnegative integers $n$ such that there exists a canonical ideal $\omega$ and an almost reduction $(a)$ of $\omega$ with $\omega^{n+1} = a\omega^n$ is called the \textit{canonical reduction number} of $R$ and is denoted by $\mathrm{can.red}(R)$.   
\end{definition}
\begin{remark}\label{canonical-reduction-remarks}
    Let $R$ be a generically Gorenstein Cohen-Macaulay local ring with canonical ideal $\omega$.
    \begin{enumerate}
        \item One has $\mathrm{can.red}(R) \leq 1$ if and only if $R$ is Gorenstein \cite[Proposition 2.9]{Kumashiro}.
        \item One has $\mathrm{can.red}(R) \leq 2$ if and only if there is an isomorphism $\tr(\omega) \cong \omega^*$ \cite[Theorem 2.13]{Kumashiro}. 
        \item If $R$ is one dimensional, then $\mathrm{can.red}(R)$ is bounded above by $e(R) - 1$ where $e(R)$ denotes the multiplicity of $R$.
    \end{enumerate}
\end{remark}
Now, we are ready to state the main theorem of this section.
\begin{theorem}\label{theorem-for-canonical-reduction-2}
    Let $R$ be a one dimensional generically Gorenstein Cohen-Macaulay local ring with canonical ideal $\omega$. Assume that $\mathrm{can.red}(R) \leq 2$. Then, the following are equivalent for $M \in \Omega \CM(R)$.
    \begin{enumerate}
        \item The dual $DM$ belongs to $\Omega \CM(R)$.
        \item There is an inclusion $\tr(M) \subseteq \tr(\omega)$. 
    \end{enumerate}
\end{theorem}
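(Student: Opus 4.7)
The direction (1) $\Rightarrow$ (2) is already recorded as Lemma \ref{lemma-where-the-trace-is-contained-in-canonical-trace}, so the substance of the theorem is the converse, and my plan is to assemble this converse out of the three pieces of technology introduced just above: reflexivity equals $\Omega\CM$, the trace-ideal/blowup identity $b(\omega) \subseteq \tr(\omega)$, and the $\omega$-Ulrich characterization via duality.

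The key observation is that the hypothesis $\mathrm{can.red}(R) \leq 2$ is designed precisely to collapse the containment $b(\omega) \subseteq \tr(\omega)$ to an equality. Indeed, by Remark \ref{canonical-reduction-remarks}(2), $\mathrm{can.red}(R)\leq 2$ is equivalent to $\tr(\omega) \cong \omega^*$, and Remark \ref{blow-up-remarks}(3) then upgrades the general inclusion $b(\omega)\subseteq \tr(\omega)$ to the equality $b(\omega) = \tr(\omega)$. This would be the first step.

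Assuming (2), that is $\tr(M) \subseteq \tr(\omega)$, the plan is then the following. Since $M \in \Omega\CM(R)$, Remark \ref{omega-Ulrich-remarks}(1) gives that $M$ is reflexive. Combining the previous step with the hypothesis yields
\begin{align*}
\tr(M) \subseteq \tr(\omega) = b(\omega),
\end{align*}
and the converse direction in Remark \ref{i-ulrich-modules}(2), which is the one valid for reflexive modules, lets us conclude that $M$ is $\omega$-Ulrich. Finally, Proposition \ref{proposition-omega-ulrich}(1) tells us that the canonical dual of an $\omega$-Ulrich module is again in $\Omega\CM(R)$, giving $DM \in \Omega\CM(R)$ and hence (1).

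I do not expect a serious obstacle here, since each step is a direct invocation of a lemma or remark stated in the preceding subsection; the content of the theorem is really the recognition that the canonical reduction number hypothesis is exactly what is needed to turn the general inclusion $b(\omega)\subseteq \tr(\omega)$ into the equality that makes the reflexive $I$-Ulrich criterion applicable with $I = \omega$. The only point where I would want to be careful is the implicit use of the fact that $\omega$ can be taken to be a regular ideal, which is automatic from the generically Gorenstein assumption, so that $b(\omega)$ is defined in the first place.
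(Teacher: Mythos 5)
Your proposal is correct and follows essentially the same route as the paper's own proof: both directions are handled identically, with the converse obtained by using $\mathrm{can.red}(R)\leq 2$ to get $\tr(\omega)\cong\omega^*$, hence $b(\omega)=\tr(\omega)$ via Remark \ref{blow-up-remarks}(3), then invoking reflexivity of $M$, the reflexive case of Remark \ref{i-ulrich-modules}(2) to see $M$ is $\omega$-Ulrich, and Proposition \ref{proposition-omega-ulrich} to conclude $DM\in\Omega\CM(R)$.
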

\begin{proof}
    One implication was proved in Lemma \ref{lemma-where-the-trace-is-contained-in-canonical-trace}. For the other implication, assume that $\tr(M) \subseteq \tr(\omega)$. Since $R$ has canonical reduction number at most $2$, by Remark \ref{canonical-reduction-remarks}(2) we have an isomorphism $\tr(\omega) \cong \omega^*$. By Remark \ref{blow-up-remarks}(3), this means that we have an equality $\tr(M) \subseteq b(\omega)$. As $M \in \Omega \CM(R)$, it is reflexive. Therefore, the inclusion $\tr(M) \subseteq b(\omega)$ implies that $M$ is an $\omega$-Ulrich module by Remark \ref{i-ulrich-modules}(2). Hence, by Proposition \ref{proposition-omega-ulrich}, we conclude that $DM \in \Omega \CM(R)$. 
\end{proof}
We give three examples. Before the first one, we recall the definition of an almost Gorenstein local ring. This definition is due to Goto-Takahashi-Taniguchi \cite{Goto-Takahashi-Taniguchi} who extended the definitions of Barucci-Fr{\"o}berg \cite{Barucci-Froberg} and Goto-Matsuoka-Phuong \cite{Goto-Matsuoka-Phuong}.
\begin{definition}\label{almost-Gorenstein-definition}
    Let $R$ be a Cohen-Macaulay local ring with canonical module $\omega$. Then, $R$ is called \textit{almost Gorenstein} if there exists an exact sequence 
    \begin{align*}
    0 \to R \to \omega \to C \to 0
    \end{align*}
    of finitely generated $R$-modules such that the multiplicity of $C$ equals the minimal number of generators of $C$.
\end{definition}
\begin{example}\label{almost-Gorenstein-example-which-shows-that-minimal-multiplicity-can-be-dropped}
    Let $R$ be an almost Gorenstein local ring of dimension one which is not Gorenstein. Then, by \cite[Therem 3.12]{Kumashiro}, $R$ has canonical reduction number 2. The equality $b(\omega) = \tr(\omega) = \m$ implies that every module $M$ in $\Omega \CM^\times(R)$ is $\omega$-Ulrich and hence satisfies an isomorphism $M^* \cong DM$. Moreover, $\Omega \CM^\times(R)$ is closed under canonical dual.
\end{example}
\begin{example}\label{far-flung-example}
    Let $R$ be a one dimensional Cohen-Macaulay local ring with canonical module $\omega$. Assume that $R$ is analytically unramified. This implies that $R$ is reduced and hence generically Gorenstein. In this case, the conductor ideal $\co(R)$ is contained in $\tr(\omega)$. In fact, it is contained in $\tr(M)$ for any $R$-module $M$ provided that $\tr(M)$ is regular \cite[Corollary 3.6]{Dao-Maitra-Sridhar}. Herzog-Kumashiro-Stamate studies in \cite{Herzog-Kumashiro-Stamate} the case where the equality $\co(R)= \tr(\omega)$  holds. They call such rings \textit{far-flung Gorenstein}. Note that, the general inclusions $\co(R) \subseteq b(\omega) \subseteq \tr(\omega)$ imply that far-flung Gorenstein rings have canonical reduction number at most 2. Hence, we are in the landscape of Theorem $\ref{theorem-for-canonical-reduction-2}$. However, assume that $M$ and $DM$ both belong to $\Omega \CM(R)$ and assume that $M$ has constant rank $r >0$. Then, by \cite[Theorem 4.1]{Herzog-Kumashiro-Stamate}, we have isomorphisms
    \begin{align*}
       \Hom_R(DM, R) \cong \Hom_R(\omega, M) \cong \overline{R}^{\oplus r}
    \end{align*}
    and this gives us $DM \cong \overline{R}^{\oplus r}$ since $DM$ is reflexive and $\overline{R}$ is isomorphic to its $R$-dual. Further, since $\overline{R}$ is regular, it is Gorenstein and it is therefore isomorphic, as an $R$-module, to its canonical dual. Therefore, over a far-flung Gorenstein domain, if $M$ and $DM$ are both reflexive, then $M \cong \overline{R}^{\oplus R}$ for some $R$. 
\end{example}
\begin{example}\label{multiplicity-3-example}
    Let $R$ be a one dimensional Cohen-Macaulay local ring. Assume that $R$ is generically Gorenstein and it has multiplicity $3$. Then, by Remark \ref{canonical-reduction-remarks}(3), $R$ has canonical reduction number at most 2. So, for rings with multiplicity 3, when $M \in \Omega \CM(R)$, the dual $DM$ also is in $\Omega \CM(R)$ if and only if $\tr(M) \subseteq \tr(\omega)$. 
\end{example}
We are now ready to compute the cohomology annihilator ideal of a one dimensional almost Gorenstein local ring.
\begin{theorem}\label{theorem-cohomology-annihilator-of-an-almost-Gorenstein-ring}
    Let $R$ be an analytically unramified one dimensional almost Gorenstein complete local ring. Then, we have an equality 
    \begin{align*}
    \ca(R) = \co(R).
    \end{align*}
\end{theorem}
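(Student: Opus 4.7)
The plan is to sandwich $\ca(R)$ between $\co(R)$ and $\sann_R(\overline{R})$ and then identify all three ideals using the machinery built up earlier in the paper. Concretely, I will establish the chain
\begin{align*}
\co(R) \subseteq \ca(R) = \sann_R(\CM(R)) \subseteq \sann_R(\overline{R}) = \co(R),
\end{align*}
which collapses to the desired equality.

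The lower bound $\co(R) \subseteq \ca(R)$ is immediate from Wang's theorem as recorded in Remark \ref{conductor-remarks}(4), since an analytically unramified local ring is automatically reduced and the ring is assumed complete. For the middle equality I would invoke Theorem \ref{main-theorem}, whose hypothesis is that $\Omega \CM^\times(R)$ is closed under the canonical dual $D(-)$; Example \ref{almost-Gorenstein-example-which-shows-that-minimal-multiplicity-can-be-dropped} verifies this hypothesis exactly for one dimensional almost Gorenstein rings that are not Gorenstein, while the Gorenstein case is covered directly by Remark \ref{cohomology-annihilator-remarks}(5). So the middle equality holds across the whole class of rings considered in the statement.

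For the second inclusion in the chain, I would use that $R$ is analytically unramified to conclude that $\overline{R}$ is a finitely generated $R$-module (Remark \ref{conductor-remarks}(2)); being torsion-free of dimension one over the one dimensional Cohen-Macaulay ring $R$, it is itself maximal Cohen-Macaulay, so $\overline{R} \in \CM(R)$ and hence $\sann_R(\CM(R)) \subseteq \sann_R(\overline{R})$ tautologically. The final equality $\sann_R(\overline{R}) = \co(R,\overline{R}) = \co(R)$ is Remark \ref{conductor-remarks}(3) applied to the finite birational extension $R \subseteq \overline{R}$.

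Every ingredient is already developed earlier in the paper, so the proof is essentially a composition of these facts rather than a new computation. The only point that requires a moment of vigilance is the Gorenstein versus non-Gorenstein dichotomy when producing $\ca(R) = \sann_R(\CM(R))$, because Example \ref{almost-Gorenstein-example-which-shows-that-minimal-multiplicity-can-be-dropped} is phrased in the non-Gorenstein case; but Remark \ref{cohomology-annihilator-remarks}(5) closes the Gorenstein subcase without any additional effort, so no genuine obstacle arises.
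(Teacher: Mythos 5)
Your proof is correct and takes essentially the same route as the paper: Wang's inclusion $\co(R) \subseteq \ca(R)$ from Remark \ref{conductor-remarks}(4), the equality $\ca(R) = \sann_R(\CM(R))$ via Theorem \ref{main-theorem} and Example \ref{almost-Gorenstein-example-which-shows-that-minimal-multiplicity-can-be-dropped}, and then $\sann_R(\CM(R)) \subseteq \sann_R(\overline{R}) = \co(R)$ using Remark \ref{conductor-remarks}(3). Your explicit treatment of the Gorenstein subcase via Remark \ref{cohomology-annihilator-remarks}(5) is a small extra precaution that the paper leaves implicit, but otherwise the argument is identical.
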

\begin{proof}
    By Example \ref{almost-Gorenstein-example-which-shows-that-minimal-multiplicity-can-be-dropped}, we know that $\Omega \CM^\times(R)$ is closed under the canonical dual. Therefore, by Theorem \ref{main-theorem}, we have an equality 
    \begin{align*}
    \ca(R) = \sann_R(\CM(R)).
    \end{align*}
    Hence, we have $\ca(R) \subseteq \sann_R(\overline{R}) = \co(R)$. On the other hand, by Wang's result which was mentioned in Remark \ref{conductor-remarks}(4), we have $\co(R) \subseteq \ca(R)$. Combining the two results, we obtain the desired equality.  
\end{proof}

\bibliographystyle{alpha}
\bibliography{ca_ag.bib}
\end{document}